\theoremstyle{definition}
\newtheorem{defi}{Definition}[section]
\newtheorem{rema}{Remark}[section]
\newtheorem{conj}{Conjecture}
\theoremstyle{plain}
\newtheorem{theo}{Theorem}[section]
\newtheorem{prop}[theo]{Proposition}
\newtheorem{coro}[theo]{Corollary}
\numberwithin{equation}{section}
\newcommand{\mN}{\mathbb{N}}
\newcommand{\mZ}{\mathbb{Z}}
\newcommand{\mP}{\mathbb{P}}
\newcommand{\mdot}{\!\cdot\!}
\newcommand{\mdiv}{\!\mid\!}
\newcommand{\ndiv}{\!\nmid\!}
\newcommand{\copr}{\!\perp\!}
\newcommand{\ncopr}{\!\not\perp\!}
\begin{document}

\title{Divisibility in paired progressions, Goldbach's conjecture,	and the infinitude of prime pairs}
\author{Mario Ziller and John F. Morack}
\date{}

\maketitle

\begin{abstract}

We investigate progressions in the set of pairs of integers $\mZ^2$ and define a\linebreak generalisation of the Jacobsthal function. For this function, we conjecture a specific\linebreak upper bound and prove that this bound would be a sufficient condition for the truth of the Goldbach conjecture, the infinitude of prime twins, and more general of prime pairs with a fixed even difference.\\
\end{abstract}

\section{Introduction}

Henceforth, we denote the set of integral numbers by $\mZ$ and the set of natural numbers, i.e. positive integers, by $\mN$. $\mP=\{p_i\mid i\in\mN\}$ is the set of prime numbers with $p_1=2$. As usual, we define the $n^{th}$ primorial number as the product of the first  $n$ primes: $p_n\#=\prod_{i=1}^n p_i\ , n\in\mN$. For similar objects of the specific context, we follow the notation of a recent paper \cite{Ziller_Morack_2016}.\\

We investigate the set of pairs of integers $\mZ^2=\mZ\times\mZ$ and first define divisibility of pairs with weak postulations. Afterwards, Jacobsthal's function \cite{Jacobsthal_1960_I, Erdos_1962} will be\linebreak generalised for the case of progressions of consecutive integer pairs using this concept of divisibility. In the subsequent sections, we demonstrate the relationship between\linebreak various unsolved problems in number theory, including Goldbach's conjecture and\linebreak the twin prime conjecture, with a specific bound of the generalised Jacobsthal\linebreak function.

\subsection*{Divisibility of integer pairs}	

Considering $\mZ^2$ as a canonical module over $\mZ$ with $(a,b)+(c,d)=(a+c,b+d)$ and $k\mdot(a,b)=(k\mdot a,k\mdot b)$ for all $a,b,c,d,k\in\mZ$, would imply strong requirements of divisibility: $k\mdiv (a,b)$ if and only if there exists $(c,d)\in\mZ^2$ with $(a,b)=k\mdot(c,d)$. This definition is equivalent to: $k\mdiv (a,b)$ if and only if $k\mdiv a$ and $k\mdiv b$. In this paper instead, we only use the weak divisibility as follows.

\begin{defi} {\itshape Divisor of a pair.}\\
An integer $k\in\mZ$ is a divisor of a pair $(a,b)\in\mZ^2$ if $k\mdiv a$ or $k\mdiv b$. We simply write $k\mdiv(a,b)$.
\end{defi}

\begin{rema}
\quad$k\mdiv(a,b)\Rightarrow k\mdiv a\mdot b$.\\
We remark that $k\mdiv a\mdot b$ does not necessarily mean $k\mdiv(a,b)$. If $k=k_1\mdot k_2$, $k_1\mdiv a$, and $k_2\mdiv b$ then $k_1\ndiv b$  and $k_2\ndiv a$ may pertain.\\
But prime numbers, on the other hand,  may be characterised by this condition:\\
\quad$p\in\mP\quad\Leftrightarrow\quad (\ p\mdiv a\mdot b\Rightarrow p\mdiv(a,b)\ )$.
\end{rema}

As a consequence of the last definition we declare coprimeness accordingly.

\begin{defi} {\itshape Coprime.}\\
An integer $k\in\mZ$ and a pair $(a,b)\in\mZ^2$ are coprime, and we write $k\copr(a,b)$, if $k\copr a$ and $k\copr b$.
\end{defi}

\begin{rema}
$k\copr(a,b)\Rightarrow k\ndiv a\mdot b$.\\
As a conclusion, an integer $k\in\mZ$ and a pair $(a,b)\in\mZ^2$ are not coprime, and we write $k\ncopr(a,b)$, if $k\ncopr a$ or $k\ncopr b$. In other words, there exists an $k^*\in\mZ$ with $k^*\mdiv k$ and $k^*\mdiv (a,b)$. Again, $k$ itself is not necessarily a divisor of $(a,b)$ here.
\end{rema}

\subsection*{Jacobsthal function}	

The ordinary Jacobsthal function $j(n)$ is known to be the smallest natural number $m$, such that every sequence of $m$ consecutive integers contains at least one integer coprime to $n$ \cite{Jacobsthal_1960_I, Erdos_1962}.

\begin{defi} \label{Jacobsthal} {\itshape Jacobsthal function.}\\
For $n\in\mN$, the Jacobsthal function $j(n)$ is defined as
$$j(n)=\min\ \{m\in\mN\mid\forall\ a\in\mZ\ \exists\ q\in\{1,\dots,m\}:a+q\copr n\}.$$
\end{defi}

\begin{rema}
This definition is equivalent to the formulation that $j(n)$ is the greatest difference $m$ between two terms in the sequence of integers which are coprime to $n$.
\begin{equation*} \begin{split}
j(n)=\max\ \{m\in\mN\mid\ &\exists\ a\in\mZ\ :a\copr n\land a+m\copr n\ \land\\
&\forall\ q\in\{1,\dots,m-1\}:a+q\ncopr n\}.
\end{split} \end{equation*}
In other words, $(j(n)-1)$ is the greatest length $m^*=m-1$ of a sequence of consecutive integers which are not coprime to $n$.
\end{rema}

An analogous function will be defined below for sequences of integer pairs in place of consecutive integers.

\subsection*{Paired progressions}	

Progressions of consecutive pairs in $\mZ^2$ can be defined in an intuitive way. There is a natural duality between a progression of consecutive pairs of integers and a pair of progressions of consecutive integers.

\begin{defi} {\itshape Consecutive pairs.}\\
\mbox{$(a_1,b_1), (a_2,b_2)\in\mZ^2$} are called consecutive pairs if \mbox{$a_2=a_1+1$} and \mbox{$b_2=b_1+1$}.\linebreak $(a_2,b_2)$ is called successor of $(a_1,b_1)$.
\end{defi}

\begin{defi} {\itshape Paired progression.}\\
An ordered  sequence of consecutive pairs $\{(a_i,b_i)\in\mZ^2\}_{i=1,\dots,k}$ is called a paired\linebreak progression if $(a_{i+1},b_{i+1})$ is a successor of $(a_i,b_i)$ for $i=1,\dots,k-1$.
\end{defi}

\begin{rema}
For every paired progression $\{(a_i,b_i)\in\mZ^2\}_{i=1,\dots,k}$ as defined above, there exists a pair $(a,b)\in\mZ^2$ with $(a_i,b_i)=(a+i,b+i)$ for $i=1,\dots,k$. We use the notation $\langle a,b\rangle_k$ for this paired progression and point to the fact that $(a,b)$ itself is not member of the progression.\\
\end{rema}

\section{Generalised Jacobsthal function}

We now generalise Jacobsthal's function on successive pairs of integers as its canonical extension to paired progressions and apply weak divisibility at it. 

\begin{defi} {\itshape Paired Jacobsthal function.}\label{j_2}\\
Let $n$ be a natural number. The paired Jacobsthal function $j_2(n)$ is defined to be the smallest natural number $m$, such that every paired progression $\langle a,b\rangle_m$ of length $m$ with an even difference of its pair elements contains at least one pair coprime to $n$.\\
\begin{equation*} \begin{split}
j_2(n)=\min\ \{m\in\mN\mid\ &\forall\ (a,b)\in\mZ^2\ with\ 2\mdiv (b-a):\\
&\exists\ (x,y)\in\langle a,b\rangle_m:n\copr (x,y)\},\\
or \quad j_2(n)=\min\ \{m\in\mN\mid\ &\forall\ (a,b)\in\mZ^2\ with\ 2\mdiv (b-a):\\
&\exists\ q\in\{1,\dots,m\}:n\copr (a+q,b+q)\}.
\end{split} \end{equation*}
\end{defi}

\begin{rema}
In the particular case of an odd difference of the pair elements, there is no pair coprime to an even $n$ because either the first or the second element of the pair would be even. This trivial case must be excluded. Otherwise $j_2(n)$ would not be defined for even $n$.
\end{rema}

This definition is equivalent to the formulation that the paired Jacobsthal function $j_2(n)$ is the greatest difference $m$ between two pairs in a sequence of consecutive pairs with an even difference of its pair elements, which are coprime to $n$.
\begin{equation*} \begin{split}
j_2(n)=\max\ \{m\in\mN\mid\ &\exists\ (a,b)\in\mZ^2\ with\ 2\mdiv (b-a):\\
&n\copr (a,b)\ \land\ n\copr (a+m,b+m)\ \land\\
&\forall\ q\in\{1,\dots,m-1\}:n\ncopr (a+q,b+q)\}.
\end{split} \end{equation*}

In other words, $(j_2(n)-1)$ is the greatest length $m^*=m-1$ of  a paired progression\linebreak $\langle a,b\rangle_{m^*}$ with an even difference of its pair elements where no pair is coprime to $n$.\\

\begin{rema} \label{properties}
The following statements are elementary consequences of the definition \ref{j_2} of the paired Jacobsthal function and describe some interesting properties of it. Equivalent properties are known for the common Jacobsthal function for integer\linebreak sequences \cite{Jacobsthal_1960_I,Ziller_Morack_2016}.

Product.\\ 
\hspace*{ 1cm}$\forall\ n1,n2 \in\mN:j_2(n1\mdot n2)\ge j_2(n1) \land j_2(n1\mdot n2)\ge j_2(n2)$.

Coprime product.\\ 
\hspace*{ 1cm}$\forall\ n1,n2 \in\mN>1\mid n1\copr n2:j_2(n1\mdot n2)>j_2(n1)\land j_2(n1\mdot n2)>j_2(n2)$.

Greatest common divisor.\\ 
\hspace*{ 1cm}$\forall\ n1,n2 \in\mN:j_2(gcd(n1,n2))\le j_2(n1) \land j_2(gcd(n1,n2))\le j_2(n2)$.

Prime power.\\ 
\hspace*{ 1cm}$\forall\ n,k \in\mN\ \forall\ p \in\mP:j_2(p^k\mdot n)=j_2(p\mdot n)$.

Prime separation.\\ 
\hspace*{ 1cm}$\forall\ n,n^*,k \in\mN\ \forall\ p \in\mP\mid n=p^k\mdot n^*,p\copr n^*:j_2(n)=j_2(p\mdot n^*)$.
\end{rema}

The last remark implies that the entire paired Jacobsthal function is also determined by its values for products of distinct primes. The particular case of primorial numbers is therefore most interesting because the function values at these points contain the relevant information for constructing general upper bounds. The paired Jacobsthal function of primorial numbers $h_2(n)$ is therefore defined as the smallest natural number $m$, such that every paired progression $\langle a,b\rangle_m$ of length $m$ with $2\mdiv (b-a)$ contains at least
one pair coprime to  the product of the first $n$ primes.\\

\begin{defi} {\itshape Primorial paired Jacobsthal function.} \label{h_2}\\
For $n\in\mN$, the primorial paired Jacobsthal function $h_2(n)$ is defined as
$$h_2(n)=j_2(p_n\#).$$
\end{defi}

In other words, $h_2(n)$ is the smallest length $m$ of a paired progression $\langle a,b\rangle_m$ with $2\mdiv (b-a)$ containing at least one pair coprime to all of the first $n$ primes.\\
\begin{equation*} \begin{split}
h_2(n)=\min\ \{m\in\mN\mid\ &\forall\ (a,b)\in\mZ^2\ with\ 2\mdiv (b-a):\\
&\exists\ (x,y)\in\langle a,b\rangle_m\ \forall\ i\in\{1,\dots,n\}:p_i\copr (x,y)\}.\\
\end{split} \end{equation*}\\

\section{Relationship to some open problems}

\subsection*{Goldbach conjecture}	

Goldbach formulated his original conjecture in a letter to Euler \cite{Goldbach_1742} dated June 7, 1742 \cite{Kronecker_1901, Dickson_1919_1}. It has become one of the most famous unsolved mathematical problems. For our considerations, we investigate the best known and sometimes called strong or binary variant of the problem. 

\begin{conj} {\itshape Goldbach conjecture.}\\
Every even natural number $2\mdot n$ with $n\in\mN>1$ can be expressed as the sum of two primes $q_1,q_2\in\mP$. 
$$\forall\ n\in\mN>1\ \exists\ q_1,q_2\in\mP:q_1+q_2=2\mdot n.$$
\end{conj}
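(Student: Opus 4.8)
Since the Goldbach conjecture is itself open, the realistic target here is a \emph{conditional} proof: I would introduce a conjectural polynomial upper bound on the primorial paired Jacobsthal function — something of the shape $h_2(n)\le p_n^{2}$, the exact constant being the point at issue — and show it forces every even $2n>2$ to be a sum of two primes. The bridge is the elementary sieve fact that if $1<q<p_{N+1}^{2}$ and $q$ is coprime to the primorial $p_N\#$, then $q$ is prime: a nontrivial factorisation $q=ab$ with $a\le\sqrt q<p_{N+1}$ would have a prime divisor $\le p_N$, which then also divides $q$, contradicting coprimeness. So ``coprime to $p_N\#$ and below $p_{N+1}^{2}$'' is a certificate of primality, and Goldbach for $2n$ reduces to finding $q_1\in\{2,\dots,2n-2\}$ such that $q_1$ and $2n-q_1$ are both coprime to a suitable $p_N\#$ chosen with $p_{N+1}^{2}>2n-2$, so that both candidates lie below $p_{N+1}^{2}$.

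This is where the paired Jacobsthal function enters. Write the desired decomposition as $q_1+q_2=2n$ and look at the pair $(q_1,\,q_1-2n)$. Its pair-element difference is $-2n$, which is even — exactly the admissibility condition in Definition~\ref{j_2} — and, since coprimeness of an integer to $p_N\#$ depends only on its absolute value, $(q_1,\,q_1-2n)$ is coprime to $p_N\#$ precisely when $q_1$ and $2n-q_1$ both are. As $q_1$ runs over $\{2,3,\dots,2n-2\}$, these pairs are exactly the members of the paired progression $\langle 1,\,1-2n\rangle_{2n-3}$.

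I would then take $N$ maximal with $p_N^{2}\le 2n-2$ (so automatically $p_{N+1}^{2}>2n-2$; the finitely many $n$ with $2n-2<p_1^{2}=4$ are settled by hand). If $2n-3\ge j_2(p_N\#)=h_2(N)$, the very definition of $j_2$ produces in $\langle 1,\,1-2n\rangle_{2n-3}$ a pair $(q_1,\,q_1-2n)$ coprime to $p_N\#$; then $q_1$ and $2n-q_1$ are coprime to $p_N\#$ and both lie in $(1,p_{N+1}^{2})$ since $2\le q_1\le 2n-2$, hence both are prime and sum to $2n$. Thus everything collapses to the inequality $h_2(N)\le 2n-3$, which, because the choice of $N$ gives $2n-2\ge p_N^{2}$, follows from the conjectured bound $h_2(n)\le p_n^{2}$. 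The twin-prime and fixed-even-difference statements come out identically: for fixed even $2d$ one runs a window of length $h_2(N)$ placed just below $p_{N+1}^{2}$, realised as a paired progression $\langle c,\,c+2d\rangle_{h_2(N)}$, and the bound yields prime pairs $q,\,q+2d$ of unbounded size.

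The genuinely hard part is the only non-elementary one: proving — or even fixing the correct constant in — the bound $h_2(n)\le p_n^{2}$; everything upstream of it is sieve bookkeeping. The one spot that needs care is the choice of $N$: enlarging $N$ makes the primality window $(1,p_{N+1}^{2})$ roomier but enlarges $h_2(N)$, hence the length the progression must have, so $N$ must be taken as large as the constraint $p_{N+1}^{2}>2n-2$ permits and no larger, in order that ``window long enough'' and ``window below $p_{N+1}^{2}$'' hold simultaneously.
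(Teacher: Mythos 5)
You correctly treat the statement as an open conjecture and offer what the paper itself offers: a conditional reduction to a bound on the primorial paired Jacobsthal function. Your reduction is the same in substance as the paper's Corollary~\ref{t_gold} combined with Proposition~\ref{bound_t_gold} --- encode the candidate decompositions $q_1+(2\mdot n-q_1)$ as a paired progression with even pair-difference, certify primality of both coordinates by coprimality to $p_N\#$ inside the window $(1,p_{N+1}^2)$, and invoke a conjectured upper bound on $h_2$. The bookkeeping differs in a way worth noting. The paper confines $q_1$ to the short interval $(p_{k_n},p_{k_n}^2)$, with $k_n$ fixed by $p_{k_n}^2+p_{k_n}\le 2\mdot n<p_{k_n+1}^2+p_{k_n+1}$, and runs the progression $\langle p_{k_n},p_{k_n}-2\mdot n\rangle_{p_{k_n}^2-p_{k_n}-1}$; this requires the bound $h_2(k)<p_k^2-p_k$ but yields the stronger ``tightened'' Goldbach statement localising $q_1$. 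You instead take the full progression $\langle 1,1-2\mdot n\rangle_{2n-3}$ with $N$ maximal such that $p_N^2\le 2n-2$, which gives only plain Goldbach but from the formally weaker hypothesis $h_2(N)\le p_N^2$. One small repair is needed there: $2n-2\ge p_N^2$ only gives $2n-3\ge p_N^2-1$, so your final inequality $h_2(N)\le 2n-3$ does not follow from $h_2(N)\le p_N^2$ directly; it is rescued by parity ($p_N^2$ is odd for $N\ge 2$ while $2n-2$ is even, hence in fact $2n-3\ge p_N^2$), and the case $N=1$ must be handled separately. Finally, your one-line sketch of the prime-pairs analogue glosses over the point where the paper's stronger constant is genuinely used: since the second coordinate is shifted by $2\mdot n$, the window must leave room of size $2\mdot n<p_k$ below $p_{k+1}^2$, which is exactly what the choice $\langle p_k,p_k+2\mdot n\rangle_{p_k^2-p_k-1}$ and the estimate $p_k^2+p_k-1<(1+p_k)^2\le p_{k+1}^2$ secure in Proposition~\ref{bound_t_pairs}.
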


We like to tighten this conjecture slightly and formulate a more constructive\linebreak assertion.

\begin{coro} \label{t_gold}
Let $n\in\mN\ge 6$, and $k_n$ denotes the index of the uniquely determined prime $p_{k_n}\in\mP$ with $p_{k_n}^2+p_{k_n}\le 2\mdot n<p_{k_n+1}^2+p_{k_n+1}$.

The Goldbach conjecture holds if for all $n\ge 6$ there exist two primes $q_1,q_2\in\mP$ with the conditions $p_{k_n}<q_1<p_{k_n}^2$ and $q_1+q_2=2\mdot n$.
\end{coro}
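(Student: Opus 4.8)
The plan is to read the corollary strictly as a logical \emph{implication}: assuming the stated existence hypothesis for all $n\ge 6$, I must deduce the Goldbach conjecture, namely that every $2\mdot n$ with $n\in\mN>1$ is a sum of two primes. Since Goldbach ranges over $n\ge 2$ while the hypothesis only speaks about $n\ge 6$, the argument splits cleanly into two disjoint ranges: the finitely many residual values $n\in\{2,3,4,5\}$, and the tail $n\ge 6$ that the hypothesis addresses directly.

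For the tail $n\ge 6$ I would simply invoke the hypothesis. It furnishes primes $q_1,q_2\in\mP$ satisfying $p_{k_n}<q_1<p_{k_n}^2$ together with $q_1+q_2=2\mdot n$. The only part of this conclusion relevant to Goldbach is the equation $q_1+q_2=2\mdot n$; the two-sided localisation of $q_1$ is an extra constraint that merely \emph{strengthens} the hypothesis and can be discarded. Thus for every $n\ge 6$ the pair $(q_1,q_2)$ is already a Goldbach decomposition of $2\mdot n$, and nothing further is needed on this range. I would note in passing only that $k_n$ is well defined here, since the half-open intervals $[\,p_k^2+p_k,\ p_{k+1}^2+p_{k+1})$ partition $[6,\infty)$ and $2\mdot n\ge 12$ for $n\ge 6$, so that the hypothesis is not vacuous.

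The residual range is handled by explicit exhibition: $4=2+2$, $6=3+3$, $8=3+5$, and $10=3+7$. These four identities dispose of $n\in\{2,3,4,5\}$ outright, and together with the tail they cover every even number $2\mdot n\ge 4$, which is exactly the content of the conjecture.

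I do not expect a genuine obstacle in this argument; its substance is almost entirely bookkeeping. The only points demanding any care are matching the index ranges at the boundary $n=6$, so as to be certain the four small cases are precisely those left uncovered by the hypothesis, and resisting the temptation to re-establish the hypothesis itself, which here is \emph{assumed} rather than derived. The substantive mathematics, namely whether the constrained prime $q_1$ actually exists, lives in the conjectured bound on $h_2$ developed elsewhere in the paper and plays no role in proving this implication.
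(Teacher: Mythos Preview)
Your proof is correct and follows essentially the same approach as the paper: for $n\ge 6$ the assumed hypothesis immediately yields a Goldbach decomposition, and the residual cases $n\in\{2,3,4,5\}$ are dispatched by the very same explicit identities $4=2+2$, $6=3+3$, $8=3+5$, $10=3+7$. The paper additionally remarks that the hypothesis is a genuine tightening (exhibiting $14=3+11$ as a Goldbach pair excluded by the constraint on $q_1$), but this is commentary rather than part of the logical argument.
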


\begin{proof}
This is indeed a tightening because potential prime pairs with $q_1\le p_{k_n}$ and therefore $q_2\ge p_{k_n}^2$ are not considered. The smallest example therefor is $n=7$, $p_{k_n}=3$, $3+11=14$.

The assertion of the corollary is obvious for $n\ge 6$. The examples $4=2+2$, \linebreak $6=3+3,\ 8=3+5,\ \text{and}\ 10=3+7$ complete the proof.
\end{proof}

We point to a significant relation with the primorial paired Jacobsthal function\linebreak defined in the previous section. The tightened Goldbach conjecture holds if this\linebreak function is bounded in a specific way.

\begin{prop} \label{bound_t_gold}
Let $n\in\mN\ge 6$, and $k_n$ denotes the index of the prime $p_{k_n}\in\mP$ with $p_{k_n}^2+p_{k_n}\le 2\mdot n<p_{k_n+1}^2+p_{k_n+1}$.

If $h_2(k)<p_k^2-p_k$ holds for all $k\in\mN\ge 3$ then for all $n\ge 6$, there exist two primes $q_1,q_2\in\mP$ with the conditions $p_{k_n}<q_1<p_{k_n}^2$ and $q_1+q_2=2\mdot n$.
\begin{equation*} \begin{split}
&(\ \forall\ k\in\mN\ge 3:h_2(k)<p_k^2-p_k\ )\\
&\quad\Rightarrow\quad
(\ \forall\ n\in\mN\ge 6\ \exists\ q_1,q_2\in\mP:p_{k_n}<q_1<p_{k_n}^2\land q_1+q_2=2\mdot n\ ).
\end{split} \end{equation*}
\end{prop}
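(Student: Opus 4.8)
The plan is to fix $n \ge 6$, set $k = k_n$, and exhibit the desired $q_1$ inside the interval $(p_k, p_k^2)$ by a sieve argument on a paired progression. The key observation is that a prime $q_1$ with $p_k < q_1 < p_k^2$ is automatically coprime to $p_k\# = \prod_{i=1}^k p_i$ (any proper prime factor of such a $q_1$ would be $\le p_k$, but $q_1$ is prime and exceeds $p_k$); conversely, any integer $x$ with $p_k < x < p_k^2$ that is coprime to $p_k\#$ is prime, since its smallest prime factor would have to exceed $p_k$, forcing $x \ge p_{k+1}^2 > p_k^2$ if $x$ were composite. So "finding a prime $q_1 \in (p_k, p_k^2)$ with $q_1 + q_2$ also prime" reduces to finding a single value $q$ such that both $q$ and $2n - q$ are coprime to $p_k\#$, with $q$ lying in the right range — provided $2n - q$ also lands in a range where coprimality to $p_k\#$ forces primality.

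First I would pin down the ranges. Since $p_k^2 + p_k \le 2n < p_{k+1}^2 + p_{k+1}$, if $q_1$ ranges over $(p_k, p_k^2)$ then $q_2 = 2n - q_1$ ranges over $(2n - p_k^2, 2n - p_k)$; using the lower bound $2n \ge p_k^2 + p_k$ we get $q_2 > p_k$, and we must also check $q_2$ is not forced to be composite-and-large, i.e. that $q_2 < p_{k+1}^2$ or more simply that $q_2 > p_k$ together with coprimality to $p_k\#$ already yields primality. Here I would use the cleaner fact: an integer $m$ with $p_k < m$ and $m$ coprime to $p_k\#$ is prime as soon as $m < p_{k+1}^2$; and from $2n < p_{k+1}^2 + p_{k+1}$ together with $q_1 > p_k$ we get $q_2 = 2n - q_1 < p_{k+1}^2 + p_{k+1} - p_k < p_{k+1}^2$ for $k \ge 3$ (since $p_{k+1} - p_k < \text{something} < p_{k+1}^2$, trivially true). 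So both $q_1$ and $q_2$ will be prime provided we can make $q_1$ coprime to $p_k\#$ (with $p_k < q_1 < p_k^2$) and simultaneously $q_2 = 2n - q_1$ coprime to $p_k\#$.

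Next I would set up the paired progression. Consider the pair $(a, b)$ with $b - a = 2n$, say $a$ chosen so that the progression $\langle a, b \rangle_{m}$ with $m = h_2(k)$ sits in the window starting just above $p_k$: concretely take the progression whose first pair is $(p_k + 1, 2n + p_k + 1)$, so $q \in \{1, \dots, m\}$ gives pairs $(p_k + q, 2n + p_k + q)$. Since $b - a = 2n$ is even, Definition \ref{j_2} applies, and by definition of $h_2(k) = j_2(p_k\#)$ there exists $q \in \{1, \dots, h_2(k)\}$ with $p_k\# \copr (p_k + q, 2n + p_k + q)$, i.e. $p_k\#$ is coprime to both coordinates. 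Set $q_1 = p_k + q$ and $q_2 = 2n + p_k + q$... wait — here I must be careful about the sign: I actually want $q_1 + q_2 = 2n$, so I should instead take the progression $(p_k + q,\ 2n - p_k - q)$, which requires $b - a = (2n - p_k) - p_k = 2n - 2p_k$, still even, so Definition \ref{j_2} still applies. Then $q_1 = p_k + q$ satisfies $p_k < q_1 \le p_k + h_2(k) < p_k + p_k^2 - p_k = p_k^2$ exactly because the hypothesis $h_2(k) < p_k^2 - p_k$ gives the upper bound; coprimality to $p_k\#$ makes $q_1$ prime; and $q_2 = 2n - q_1$ is coprime to $p_k\#$ by the same pair-coprimality, lies above $p_k$ by the range computation, and below $p_{k+1}^2$, hence is prime. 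That produces the required pair.

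The main obstacle I anticipate is the bookkeeping at the two ends of the $q_2$-interval: one must verify that $q_2 = 2n - q_1$ genuinely stays in a regime where "coprime to $p_k\#$ and $> p_k$" implies prime — i.e. that $q_2$ can never be a composite number all of whose prime factors exceed $p_k$, which forces checking $q_2 < p_{k+1}^2$ carefully from $2n < p_{k+1}^2 + p_{k+1}$ and $q_1 > p_k$, and also that $q_2$ is positive and indeed $> p_k$ (needing $2n - p_k^2 > p_k$, equivalently $2n > p_k^2 + p_k$ minus a slack, which is exactly the lower bound in the definition of $k_n$ — this is why the interval for $q_1$ is the open interval up to $p_k^2$ rather than a closed one). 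A secondary point is making sure the chosen starting pair has an even coordinate difference so that the paired Jacobsthal machinery is applicable, which I handled above by shifting both coordinates by $q$ off the base pair $(p_k, 2n - p_k)$ whose difference $2n - 2p_k$ is even. Once these range inequalities are nailed down, the argument is a direct application of the definition of $h_2(k)$ together with the hypothesised bound.
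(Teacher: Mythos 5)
Your overall strategy coincides with the paper's: use the hypothesis $h_2(k)<p_k^2-p_k$ to locate, inside a paired progression of length $p_k^2-p_k-1$ starting at $p_k$, a pair with both coordinates coprime to $p_k\#$, and then convert coprimality plus range information into primality. However, three steps as written do not go through. Most seriously, your bound $q_2=2\mdot n-q_1<p_{k+1}^2+p_{k+1}-p_k<p_{k+1}^2$ is false: the second inequality is equivalent to $p_{k+1}<p_k$. Knowing only $q_1>p_k$ leaves open the possibility $q_2\ge p_{k+1}^2$, and $p_{k+1}^2$ is precisely the smallest composite coprime to $p_k\#$, so the ``coprime and in range implies prime'' conclusion for $q_2$ would collapse. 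The repair is the paper's move: first conclude that $q_1$ is prime (your coprimality-plus-range argument does give this), hence $q_1\ge p_{k+1}$ since $q_1>p_k$, and only then compute $q_2<p_{k+1}^2+p_{k+1}-p_{k+1}=p_{k+1}^2$.

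Two further gaps: the sequence of pairs $(p_k+q,\ 2\mdot n-p_k-q)$ is not a paired progression in the sense of the paper's definition, since the second coordinate must \emph{increase} by $1$ with the first; the paper instead uses $\langle p_k,\ p_k-2\mdot n\rangle_{p_k^2-p_k-1}$, whose pairs are $(p_k+q,\ p_k-2\mdot n+q)=(q_1,-q_2)$, and exploits that $p_i\copr q_2$ if and only if $p_i\copr(-q_2)$ --- you need this sign trick to invoke $j_2$ legitimately. Finally, the hypothesis only covers $k\ge 3$, but for $12\le 2\mdot n\le 28$ one has $k_n=2$, so the sieve argument says nothing there; the paper disposes of these cases with explicit decompositions ($12=5+7$, \dots, $28=5+23$), and your proposal never addresses them. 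All three gaps are repairable, but each is a real hole in the proof as written.
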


\begin{proof}
For $6\le n <30$, we get $k_n=2$, $p_{k_n}=3$, and $3<q_1<9$. The following examples fulfil the requirements:
$12=5+7,\ 14=7+7,\ 16=5+11,\ 18=5+13,\linebreak 20=7+13,\ 22=5+17,\ 24=5+19,\ 26=7+19,\ 28=5+23$.

Given now $n\ge 30$ and the appropriate $k_n$ as above. Then $k_n\ge 3$. Let $q_1,q_2\in\mP$ be a pair of primes with $p_{k_n}<q_1<p_{k_n}^2$ and $q_1+q_2=2\mdot n$. By assumption, we get the following inequations.

$q_1>p_{k_n}$, and

$q_1<p_{k_n}^2<p_{k_n+1}^2$.

$q_2=2\mdot n-q_1\ge p_{k_n}^2+p_{k_n}-q_1>p_{k_n}^2+p_{k_n}-p_{k_n}^2=p_{k_n}$, and

$q_2=2\mdot n-q_1<p_{k_n+1}^2+p_{k_n+1}-q_1\le p_{k_n+1}^2+p_{k_n+1}-p_{k_n+1}=p_{k_n+1}^2$

\qquad because $q_1$ is prime and therefore $q_1\ge p_{k_n+1}>p_{k_n}$.\\

For the primality of $q_1$ and $q_2$ with $p_{k_n}<q_1,q_2\in\mN<p_{k_n+1}^2$ and $q_1+q_2=2\mdot n$, it is necessary and sufficient  that $q_1\copr p_i$ and $q_2=2\mdot n-q_1\copr p_i$ for all $i=1,\dots, k_n$. Furthermore, $q_2\copr p_i$ if and only if $-q_2=q_1-2\mdot n\copr p_i$.

We now consider the paired progression $\langle p_{k_n},p_{k_n}-2\mdot n\rangle_{p_{k_n}^2-p_{k_n}-1}$ with
an obviously even difference of its corresponding pair elements. According to the assumption $h_2(k)<p_k^2-p_k$ for all $k\in\mN\ge 3$, we get $h_2(k_n)\le p_{k_n}^2-p_{k_n}-1$, and every paired\linebreak progression of length $p_{k_n}^2-p_{k_n}-1$ contains a pair coprime to  all $p_i,\ i=1,\dots,k_n$. And so it does for $\langle p_{k_n},p_{k_n}-2\mdot n\rangle_{p_{k_n}^2-p_{k_n}-1}$ and at least one of its pairs fulfills the requirements of the proposition.
\end{proof}

\subsection*{Twin prime conjecture}	

The twin prime conjecture is another best-known conjecture in number theory. It asserts that there are infinitely many prime twins, i.e. two primes with the difference 2. The true origin of this assertion is unrecorded.  Euclid proved the infinitude of primes. Sometimes, the twin prime conjecture was ascribed to Euclid therefore but with no reference. It has become common to regard the relation to the conjecture of de Polyniac \cite{de_Polignac_1849} and consider this as the origin of the twin prime conjecture, too.

The conjecture of de Polyniac \cite{Dickson_1919_1, Prinz_2012} asserts that every even natural number can be written in infinitely many ways as the difference of two consecutive primes. The twin prime conjecture is indeed included in this assertion when the even number is chosen to be 2.

\begin{conj} {\itshape Twin prime conjecture.}\\
There exist infinitely many pairs of primes $q_1,q_2\in\mP$ with the difference 2.
$$|\{q_1,q_2\in\mP\mid q_2-q_1=2\}|=\infty.$$
\end{conj}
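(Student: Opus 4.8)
\section*{Proof proposal}

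The plan is to give a \emph{conditional} argument that runs exactly parallel to the proof of Proposition~\ref{bound_t_gold}: assuming the conjectured bound $h_2(k)<p_k^2-p_k$ for all $k\in\mN$, $k\ge 3$, I would derive that there are infinitely many twin primes. (Unconditionally the statement is of course open; the point of this section, as in the Goldbach case, is to display it as a consequence of one and the same hypothesis on the primorial paired Jacobsthal function.)

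First I would fix an arbitrary index $k\ge 3$ and search for a twin prime pair inside the window $(p_k,p_{k+1}^2)$. The key observation is a primality criterion of the same kind used for the Goldbach reduction: an integer pair $(x,x+2)$ with $p_k<x$ and $x+2<p_{k+1}^2$ consists of two primes if and only if both $x$ and $x+2$ are coprime to $p_k\#$, i.e. $p_i\copr(x,x+2)$ for $i=1,\dots,k$. Indeed, if $x$ (respectively $x+2$) has no prime divisor among $p_1,\dots,p_k$ and is composite, then its least prime factor is $\ge p_{k+1}$, forcing $x\ge p_{k+1}^2$, contrary to the range. So it suffices to locate, within this window, a single pair coprime to $p_k\#$ whose two components differ by $2$.

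Next I would invoke the hypothesis through $h_2$. Consider the paired progression $\langle p_k,p_k+2\rangle_{p_k^2-p_k-1}$; its pair elements differ by $2$, which is even, so it is admissible for the definition of $j_2$. Since $h_2(k)=j_2(p_k\#)$ and, by assumption, $h_2(k)\le p_k^2-p_k-1$, every admissible paired progression of length $p_k^2-p_k-1$ — this one in particular — contains a pair $(x,x+2)$ with $p_i\copr(x,x+2)$ for all $i=1,\dots,k$. Here $x=p_k+q$ for some $q\in\{1,\dots,p_k^2-p_k-1\}$, whence $p_k<x\le p_k^2-1<p_k^2\le p_{k+1}^2$ and $x+2\le p_k^2+1<p_{k+1}^2$ (using $p_{k+1}\ge p_k+1$). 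By the criterion above, $x$ and $x+2$ are both prime, and $x>p_k$. Letting $k$ range over all integers $\ge 3$ and using $p_k\to\infty$, this exhibits twin prime pairs with arbitrarily large smaller member, hence infinitely many of them.

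The genuinely hard part is not this deduction, which is routine once the $h_2$ machinery and the primality criterion are in place, but the hypothesis itself: establishing $h_2(k)<p_k^2-p_k$ for all $k\ge 3$ is precisely the conjectured upper bound announced in the abstract and is not proved in this paper. Even the sharpest known estimates for the ordinary Jacobsthal function are far weaker than what is required at the scale $\sim p_k^2$ here, so an unconditional proof would demand a genuinely new bound on the length of runs of consecutive pairs none of which is simultaneously coprime to the first $k$ primes; without such an input the result necessarily remains conditional.
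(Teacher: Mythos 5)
Your proposal is correct as a conditional deduction and takes essentially the same route as the paper, which likewise leaves the twin prime conjecture unproved and only establishes the implication from the bound $h_2(k)<p_k^2-p_k$: your argument is precisely Proposition~\ref{bound_t_pairs} specialised to $n=1$ (same paired progression $\langle p_k,p_k+2\mdot n\rangle_{p_k^2-p_k-1}$, same primality criterion via the window $(p_k,p_{k+1}^2)$), combined with the infinitude step of Corollary~\ref{t_pairs}, which you obtain equivalently by letting $k$ range instead of by induction. You are also right, and in agreement with the paper, that the hypothesis on $h_2$ is the genuinely open part.
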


\subsection*{Prime pairs conjecture}	

The question of the infinitude of prime pairs with a fixed even difference is also a\linebreak generalisation of the twin prime conjecture and nevertheless a weaker form of\linebreak de Polyniac's conjecture  \cite{Kronecker_1901, Dickson_1919_1, Prinz_2012}. We investigate this general conjecture because it is closely related to the paired Jacobsthal function in a similar way as we proved for the Goldbach conjecture.

\begin{conj} {\itshape Prime pairs conjecture.}\\
For every even natural number $d=2\mdot n$ with $n\in\mN$, there exist infinitely many pairs of primes $q_1,q_2\in\mP$ the difference of which is $d$.
$$\forall\ n\in\mN:|\{q_1,q_2\in\mP\mid q_2-q_1=2\mdot n\}|=\infty.$$
\end{conj}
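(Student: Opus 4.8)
The plan is to prove the conjecture head-on, by producing for each fixed even difference $d=2\mdot n$ a pair $(x,x+d)$ of primes at arbitrarily large scales, so that infinitely many such pairs are obtained. The reduction that makes the paired Jacobsthal machinery applicable is the elementary sieve fact that an integer $N>1$ having no prime factor $\le\sqrt N$ is itself prime. Hence, to certify that $x$ and $x+d$ are \emph{both} prime, it suffices to certify that the pair $(x,x+d)$ is coprime to every prime $p_i$ with $p_i\le\sqrt{x+d}$ --- in the language of this paper, that $p_i\copr(x,x+d)$ for $i=1,\dots,k$, where $k$ is the index determined by the size of the pair through $p_k^2\le x+d<p_{k+1}^2$.

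First I would set up the correct window. For each large index $k$, consider the paired progression $\langle p_k,p_k+d\rangle_{p_k^2-p_k-1}$, whose successive pairs are $(x,x+d)$ with $p_k<x<p_k^2$, and therefore $x+d<p_k^2+d<p_{k+1}^2$ --- the last inequality holding for all large $k$ because $d$ is fixed while $p_{k+1}^2-p_k^2\to\infty$. On this window both coordinates lie strictly between $p_k$ and $p_{k+1}^2$, so by the sieve fact above, coprimality to $p_1,\dots,p_k$ is both necessary and sufficient for joint primality of $(x,x+d)$. The difference of the pair elements is the even number $d$, so the progression is exactly of the type for which $h_2$ is defined.

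Next I would invoke Definition \ref{h_2}: provided the length $p_k^2-p_k-1$ is at least $h_2(k)$, the progression must contain a pair coprime to $p_k\#$, i.e. a pair $(x,x+d)$ both of whose coordinates are coprime to each of $p_1,\dots,p_k$, hence a genuine prime pair $(q_1,q_2)$ with $q_2-q_1=d$. Running $k$ through all sufficiently large indices yields such pairs with $q_1>p_k\to\infty$, lying in pairwise disjoint windows, and therefore infinitely many prime pairs of difference $d$; since $d$ was an arbitrary even number, this establishes the conjecture for every even difference simultaneously. This is the same mechanism used for Goldbach in Proposition \ref{bound_t_gold}, now driven by a growing location $x$ rather than a fixed sum.

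The hard part --- in fact the only step that is not routine --- is guaranteeing $p_k^2-p_k-1\ge h_2(k)$ for all large $k$, which is precisely the bound $h_2(k)<p_k^2-p_k$ conjectured in this paper. I do not expect to discharge it in full: it is an upper bound on a Jacobsthal-type function at primorial arguments, a quantity for which even the classical analogue resists proof at this strength. Consequently, the realistic outcome of this plan is a \emph{conditional} result --- the conjectured bound on $h_2$ would imply the prime pairs conjecture --- and the conjecture itself, including the twin-prime case $d=2$, remains open until such a bound is proved.
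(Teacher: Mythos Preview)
The statement is an open conjecture and the paper offers no proof of it; you correctly recognise this and end with only a conditional claim. The conditional argument you sketch --- the paired progression $\langle p_k,\,p_k+d\rangle_{p_k^2-p_k-1}$, the sieve criterion that coprimality to $p_1,\dots,p_k$ forces primality of both coordinates in the window $(p_k,p_{k+1}^2)$, and then letting $k\to\infty$ to get infinitely many pairs --- is exactly the paper's Proposition~\ref{bound_t_pairs} combined with Corollary~\ref{t_pairs}, with only cosmetic differences (you appeal to $p_{k+1}^2-p_k^2\to\infty$ for large $k$, while the paper uses the explicit inequality $2n+p_k^2<(1+p_k)^2\le p_{k+1}^2$ under the hypothesis $p_k>2n$).
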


\begin{coro}
If the prime pairs conjecture holds, so does the twin prime conjecture.
\end{coro}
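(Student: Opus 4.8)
The plan is to observe that the twin prime conjecture is literally the instance $n=1$ of the prime pairs conjecture, so the proof amounts to a specialisation together with one bookkeeping remark. First I would note that $1\in\mN$ under the paper's convention that $\mN$ denotes the positive integers, so the universally quantified statement of the prime pairs conjecture does apply to the value $n=1$. Substituting $n=1$ yields $d=2\mdot 1=2$, and the conclusion of the prime pairs conjecture at this value reads
\[
|\{q_1,q_2\in\mP\mid q_2-q_1=2\}|=\infty,
\]
which is verbatim the assertion of the twin prime conjecture.

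Then I would only need to remark that nothing further is required: the set of ordered prime pairs with difference $2$ appearing in the twin prime conjecture is exactly the set obtained from the prime pairs conjecture at $n=1$, so the implication is immediate — indeed the two statements coincide at that index. There is essentially no obstacle here; the one point worth making explicit is the membership $1\in\mN$, which is what guarantees that the prime pairs conjecture genuinely constrains difference $2$ and not merely differences $d\ge 4$. One may summarise the argument by saying that the prime pairs conjecture is a countable conjunction of statements indexed by $n\in\mN$, whose $n=1$ term is precisely the twin prime conjecture, whence the former implies the latter.
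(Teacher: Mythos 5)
Your proposal is correct and takes exactly the same route as the paper, which simply observes that the twin prime conjecture is the case $n=1$ of the prime pairs conjecture. The extra remark that $1\in\mN$ under the paper's convention is a harmless (and accurate) piece of bookkeeping.
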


\begin{proof}
The twin prime conjecture represents the specific case $n=1$ of the prime pairs conjecture.
\end{proof}

\begin{rema}
For $n=2$, every prime pair $q_1,q_2\in\mP$ with $q_1>3$ and $q_2-q_1=2\mdot n$ must be a pair of consecutive primes because $(q_1+q_2)/2$ is divisible by 3. The conjecture of de Polyniac and the prime pairs conjecture are even equivalent for $n\le 2$, therefore.
\end{rema}

\pagebreak

We tighten the prime pairs conjecture in a similar, constructive way as we did for Goldbach's conjecture.

\begin{coro} \label{t_pairs}
The prime pairs conjecture holds if for every natural number $n\in\mN$ and every prime $p\in\mP$ with $p>2\mdot n$, there exists a pair of primes $q_1,q_2\in\mP$ with the conditions
$p<q_1<p^2$ and $q_2-q_1=2\mdot n$.
\end{coro}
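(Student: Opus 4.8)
The plan is to fix the even difference and read off an infinite family of prime pairs with that difference directly from the hypothesis, exploiting the fact that the smaller prime $q_1$ supplied by the hypothesis always exceeds the prime $p$ that we are free to choose.

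First I would fix $n\in\mN$; it then suffices to produce infinitely many pairs $q_1,q_2\in\mP$ with $q_2-q_1=2\mdot n$. I build a strictly increasing sequence of primes $r_1<r_2<\cdots$, each of which is the smaller member of such a pair, by recursion. Choose any prime $p_1>2\mdot n$ (possible since $\mP$ is infinite); the hypothesis, applied to $n$ and $p_1$, gives primes $q_1,q_2$ with $p_1<q_1<p_1^2$ and $q_2-q_1=2\mdot n$, and I set $r_1:=q_1$. Having chosen $r_1<\dots<r_k$, pick a prime $p_{k+1}>r_k$ — then automatically $p_{k+1}>2\mdot n$ — and apply the hypothesis to $n$ and $p_{k+1}$, letting $r_{k+1}$ be the smaller prime of the resulting pair; then $r_{k+1}>p_{k+1}>r_k$. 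The pairs $(r_k,\,r_k+2\mdot n)$ for $k\in\mN$ are pairwise distinct prime pairs of difference $2\mdot n$, so there are infinitely many of them. Since $n\in\mN$ was arbitrary, the prime pairs conjecture follows. Equivalently, one can argue by contradiction: were there only finitely many such pairs, let $M$ be the largest occurring value of $q_1$, or $M=0$ if there are none; take a prime $p>\max(2\mdot n,M)$, and observe that the pair the hypothesis yields has $q_1>p>M$, a contradiction.

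I expect no serious obstacle: the entire mathematical substance resides in the hypothesis, and the argument uses only the infinitude of the primes together with simple bookkeeping. The one point I would make explicit in the write-up is why the recursion genuinely generates new pairs rather than re-deriving earlier ones — this is precisely what the lower bound $p<q_1$ in the hypothesis provides, since it ties the size of each produced pair to the size of the freely chosen prime $p$. The upper bound $q_1<p^2$ plays no role in this implication; it is the feature that later connects the conjecture to an explicit bound on $h_2$, in analogy with Proposition~\ref{bound_t_gold}.
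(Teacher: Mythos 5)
Your proposal is correct and follows essentially the same route as the paper: both arguments fix $n$, repeatedly choose a prime $p$ exceeding the smaller member of the previously obtained pair, and invoke the hypothesis (using only the lower bound $p<q_1$) to produce a strictly larger pair, concluding infinitude by induction. Your write-up merely makes the recursion and the role of the bound $p<q_1$ more explicit than the paper does.
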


\begin{proof}
Given $n$, we choose an arbitrary $p\in\mP>2\mdot n$. By assumption, there exists a first pair $q_1,q_2\in\mP$ with $q_2-q_1=2\mdot n$ and $q_1>p>2\mdot n$.

Given any prime pair $q_i,q_j\in\mP$ with $q_j-q_i=2\mdot n$ and $q_i>2\mdot n$, we choose another $p\in\mP>q_i>2\mdot n$. Then there exists a pair $q_{i+1},q_{j+1}\in\mP$ with $q_{j+1}-q_{i+1}=2\mdot n$ and $q_{i+1}>p>q_{i}$.

Infinitude follows by induction.
\end{proof}
\bigskip

If the same upper bound of the primorial paired Jacobsthal function, which was related to the tightened Goldbach conjecture in proposition \ref{bound_t_gold}, holds then the above tightened prime pairs conjecture holds as well.

\begin{prop}\label{bound_t_pairs}
If $h_2(k)<p_k^2-p_k$ holds for all $k\in\mN\ge 3$ then for every natural number $n\in\mN$ and every prime $p\in\mP$ with $p>2\mdot n$, there exists a pair of primes $q_1,q_2\in\mP$ with the conditions $p<q_1<p^2$ and $q_2-q_1=2\mdot n$.
\end{prop}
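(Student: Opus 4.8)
The plan is to mimic the proof of Proposition \ref{bound_t_gold} almost verbatim, replacing the sum condition $q_1+q_2=2\mdot n$ by the difference condition $q_2-q_1=2\mdot n$. First I would fix $n\in\mN$ and a prime $p\in\mP$ with $p>2\mdot n$, and let $k$ be the index with $p=p_k$. Since $p>2\mdot n\ge 2$ we have $p\ge 3$; the cases $k=1,2$ (i.e. $p\in\{2,3\}$, forcing $n\le1$ and $p=3$, $2\mdot n=2$) should be dispatched by an explicit example such as $5,7$, so that from then on we may assume $k\ge 3$ and invoke the hypothesis $h_2(k)<p_k^2-p_k$.

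Next I would translate the primality requirement into a coprimeness statement about a paired progression. A pair $q_1,q_2$ with $p_k<q_1<p_k^2$, $p_k<q_2<p_k^2$ (I will need to check both lie in this range) and $q_2-q_1=2\mdot n$ consists of two primes if and only if $q_1\copr p_i$ and $q_2\copr p_i$ for all $i=1,\dots,k$, because every integer in $(p_k,p_k^2)$ with no prime factor among $p_1,\dots,p_k$ is prime. Writing $q_2=q_1+2\mdot n$, this is equivalent to $p_i\copr(q_1,q_1+2\mdot n)$ for all $i\le k$. So I would consider the paired progression $\langle p_k,\ p_k+2\mdot n\rangle_{p_k^2-p_k-1}$, whose pair elements differ by $2\mdot n$, an even number, as required by Definition \ref{j_2}; its pairs are exactly $(p_k+q,\ p_k+2\mdot n+q)$ for $q=1,\dots,p_k^2-p_k-1$, so the first coordinates range over $p_k+1,\dots,p_k^2-1$, i.e. strictly between $p_k$ and $p_k^2$.

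Then I would apply the hypothesis: $h_2(k)=j_2(p_k\#)<p_k^2-p_k$, so $h_2(k)\le p_k^2-p_k-1$, hence the paired progression of length $p_k^2-p_k-1$ contains at least one pair $(x,y)=(p_k+q,\ p_k+2\mdot n+q)$ coprime to $p_k\#$, i.e. with $p_i\copr(x,y)$ for all $i=1,\dots,k$. It remains to confirm that for such a pair both coordinates actually fall in the interval $(p_k,p_k^2)$ so that coprimeness to $p_1,\dots,p_k$ upgrades to primality: the first coordinate $x$ satisfies $p_k<x<p_k^2$ by construction, and the second coordinate $y=x+2\mdot n$ satisfies $y>p_k$ trivially and $y=x+2\mdot n<p_k^2$? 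This is the point needing care, since $x$ can be as large as $p_k^2-1$ and then $x+2\mdot n$ exceeds $p_k^2$.

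The main obstacle, therefore, is exactly this upper-bound bookkeeping on $q_2$: unlike the Goldbach case, where $q_2=2\mdot n-q_1$ was automatically controlled, here $q_2=q_1+2\mdot n$ can overshoot $p_k^2$. I expect the fix is to shorten the progression slightly — use $\langle p_k,\ p_k+2\mdot n\rangle_{p_k^2-p_k-2\mdot n-1}$ or similar, so that the first coordinate stays in $(p_k,\ p_k^2-2\mdot n)$ and hence $q_2<p_k^2$ — and then check that $p_k^2-p_k-2\mdot n-1\ge h_2(k)$ still follows from the hypothesis, possibly needing a marginally stronger inequality or the extra room afforded by $p>2\mdot n$ (so $2\mdot n\le p_k-1$, giving a progression of length at least $p_k^2-2p_k$, which one must compare with $h_2(k)<p_k^2-p_k$). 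Reconciling these counts — perhaps by strengthening the hypothesis to $h_2(k)<p_k^2-2p_k$ or by a sharper choice of the interval in which $q_1$ is sought — is the one genuinely non-routine step; everything else is a direct transcription of the earlier proof.
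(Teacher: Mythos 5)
Your proposal correctly sets up the reduction to the paired progression $\langle p_k,\,p_k+2\mdot n\rangle_{p_k^2-p_k-1}$ and correctly isolates the one delicate point, but it then stalls on that point and suggests resolutions (shortening the progression, or strengthening the hypothesis to $h_2(k)<p_k^2-2\mdot p_k$) that are both unnecessary and, in the latter case, would prove a different statement than the proposition asserts. The gap is in your primality criterion: you only allow yourself to conclude ``coprime to $p_1,\dots,p_k$ implies prime'' for integers in $(p_k,\,p_k^2)$, and since $q_2=q_1+2\mdot n$ can exceed $p_k^2$, you get stuck. But the correct criterion is sharper: \emph{every} integer $m$ with $p_k<m<p_{k+1}^2$ that is coprime to $p_1,\dots,p_k$ is prime, because a composite $m<p_{k+1}^2$ must have a prime factor at most $\sqrt{m}<p_{k+1}$, hence at most $p_k$. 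The proposition only demands $q_1\in(p_k,\,p_k^2)$; for $q_2$ it demands nothing beyond primality, so it suffices to place $q_2$ below $p_{k+1}^2$, not below $p_k^2$.

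That placement is automatic from the standing assumption $p_k>2\mdot n$: one has
$$q_2=q_1+2\mdot n<p_k^2+p_k<p_k^2+2\mdot p_k+1=(p_k+1)^2\le p_{k+1}^2,$$
together with $q_2>q_1>p_k$. This is exactly how the paper closes the argument: it verifies $p_k<q_1<p_k^2<p_{k+1}^2$ and $p_k<q_2<p_{k+1}^2$, observes that primality of both is then equivalent to $p_i\copr(q_1,q_1+2\mdot n)$ for $i=1,\dots,k$, and applies $h_2(k)\le p_k^2-p_k-1$ to the full-length progression $\langle p_k,\,p_k+2\mdot n\rangle_{p_k^2-p_k-1}$, with the base case $p=3$, $n=1$ handled by the pair $5,7$ just as you propose. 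With the sharper primality criterion inserted, your argument goes through verbatim and no modification of the hypothesis or of the progression length is needed.
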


\begin{proof}
$p=3$ is the only prime $p=p_k>2\mdot n,\ n\in\mN$ with $k<3$. Then $n=1$.\linebreak The example $q_1=5,\ q_2=7$ fulfills the requirements for this case.

Given now $n\in\mN$, we require $k\ge 3$ and choose an arbitrary $p_k\in\mP>2\mdot n$. Let $q_1,q_2\in\mP$ be a pair of primes with $p_k<q_1<p_k^2$ and $q_2-q_1=2\mdot n$. By assumption, we get the following inequations.

$q_1>p_k$, and

$q_1<p_k^2<p_{k+1}^2$.

$q_2=2\mdot n+q_1>q_1>p_k$, and

$q_2=2\mdot n+q_1<p_k+q_1<p_k+p_k^2<1+2\mdot p_k+p_k^2=(1+p_k)^2<p_{k+1}^2$.\\

For the primality of $q_1$ and $q_2$ with $p_k<q_1,q_2\in\mN<p_{k+1}^2$ and $q_2-q_1=2\mdot n$, it is necessary and sufficient  that $q_1\copr p_i$ and $q_2=2\mdot n+q_1\copr p_i$ for all $i=1,\dots, k$. 

We now consider the paired progression $\langle p_k,p_k+2\mdot n\rangle_{p_k^2-p_k-1}$ with
an obviously even difference of its corresponding pair elements. According to the assumption, we get $h_2(k)\le p_k^2-p_k-1$, and every paired progression of length $p_k^2-p_k-1$ contains a pair coprime to  all $p_i,\ i=1,\dots,k$. And so it does for $\langle p_k,p_k+2\mdot n\rangle_{p_k^2-p_k-1}$ and at least one of its pairs fulfills the requirements of the proposition.
\end{proof}

\pagebreak

\section{Conclusions}

In the previous section, we alleged three new conjectures. Two of them are tightenings of assumptions well-known for a long time. These are the Goldbach conjecture and the infinitude of prime pairs with a fixed even difference. The third case concerns an upper\linebreak bound of the primorial paired Jacobsthal function which we defined beforehand.\\

Below, we provide explicit formulations of these new conjectures.\\\\

\begin{conj} {\itshape Tightened Goldbach conjecture.}\\
Let $n\in\mN\ge 6$, and $k_n$ denotes the index of the uniquely determined prime $p_{k_n}\in\mP$ with $p_{k_n}^2+p_{k_n}\le 2\mdot n<p_{k_n+1}^2+p_{k_n+1}$. Then for all $n\ge 6$, there exist two primes $q_1,q_2\in\mP$ with the conditions $p_{k_n}<q_1<p_{k_n}^2$ and $q_1+q_2=2\mdot n$.
\begin{equation*} \begin{split}
&\forall\ n\in\mN\ge 6,\ p_{k_n}^2+p_{k_n}\le 2\mdot n<p_{k_n+1}^2+p_{k_n+1}\\
&\exists\ q_1,q_2\in\mP:p_{k_n}<q_1<p_{k_n}^2\ \land\ q_1+q_2=2\mdot n.
\end{split} \end{equation*}
\end{conj}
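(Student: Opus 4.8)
First a structural remark: the statement to be established is, verbatim, the hypothesis of Corollary~\ref{t_gold}, so it is a genuine strengthening of Goldbach and not merely a restatement --- decompositions with $q_1 \le p_{k_n}$ (hence $q_2 \ge p_{k_n}^2$) are deliberately discarded. Since proving it unconditionally would settle Goldbach itself, the only realistic plan is a conditional one, routed through the primorial paired Jacobsthal function $h_2$ of Definition~\ref{h_2}.

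The plan then has three stages. First, dispose of the finite initial range by direct exhibition: for $6 \le n \le 14$ one has $k_n = 2$ and $p_{k_n} = 3$, and the nine decompositions $12 = 5+7$, $14 = 7+7$, $16 = 5+11$, $18 = 5+13$, $20 = 7+13$, $22 = 5+17$, $24 = 5+19$, $26 = 7+19$, $28 = 5+23$ each satisfy $3 < q_1 < 9 = p_{k_n}^2$ and $q_1 + q_2 = 2\mdot n$; this is a terminating check. Second, for all larger $n$ --- equivalently $k_n \ge 3$ --- invoke Proposition~\ref{bound_t_gold}, which is exactly the implication
\[
\bigl(\,\forall\, k \in \mN \ge 3:\ h_2(k) < p_k^2 - p_k\,\bigr)\ \Longrightarrow\ \bigl(\,\forall\, n \ge 6\ \exists\, q_1, q_2 \in \mP:\ p_{k_n} < q_1 < p_{k_n}^2 \ \land\ q_1 + q_2 = 2\mdot n\,\bigr),
\]
so the whole problem collapses onto the single arithmetic inequality $h_2(k) < p_k^2 - p_k$ for all $k \ge 3$. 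The reduction mechanism is the one already indicated there: take $(a,b) = (p_{k_n},\, p_{k_n} - 2\mdot n)$, whose coordinate difference $-2\mdot n$ is even, and apply the bound to the paired progression $\langle p_{k_n},\, p_{k_n} - 2\mdot n\rangle_{p_{k_n}^2 - p_{k_n} - 1}$; any of its pairs coprime to $p_1, \dots, p_{k_n}$ has both coordinates lying strictly between $p_{k_n}$ and $p_{k_n+1}^2$ and free of prime factors up to $p_{k_n}$, hence both prime, which furnishes $q_1 = p_{k_n} + q$ and $q_2 = 2\mdot n - q_1$.

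The third stage --- establishing $h_2(k) < p_k^2 - p_k$ --- is where the real difficulty lies, and I expect it to be the decisive obstacle. The crude counting bound is hopeless: in a window of $m$ consecutive shifts $q$, each prime $p_i$ annihilates at most $2\lceil m / p_i\rceil$ values of $q$ (the class $-a \bmod p_i$ in the first coordinate and $-b \bmod p_i$ in the second, the two collapsing to one class for $p_1 = 2$ under the even-difference hypothesis), and $m \le \sum_{i=1}^{k} 2\lceil m/p_i\rceil$ never closes on its own. A serious attempt would have to transport the apparatus developed for the ordinary Jacobsthal function --- the Erd\H{o}s--Kanold--Iwaniec circle of ideas bounding $j(p_k\#)$ by a power of $\log(p_k\#) \sim p_k$, together with the recursive primorial estimates and explicit computations of \cite{Ziller_Morack_2016} --- to the two-coordinate setting, crucially exploiting that a long paired progression with no coprime pair must be covered by small primes in both coordinates simultaneously. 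Since $p_k^2 - p_k$ is of the same rough order as the best unconditional one-dimensional bounds, while the paired function is plausibly larger than the scalar one by a constant factor, the inequality sits right at the edge of what such methods could conceivably deliver; getting it --- rather than merely some larger polynomial in $p_k$ --- is precisely the gap that keeps the statement a conjecture.
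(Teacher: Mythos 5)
Your proposal is not (and could not be) a proof of the statement: in the paper this is explicitly the \emph{Tightened Goldbach conjecture}, for which no proof is offered. What the paper does provide is exactly the conditional architecture you reconstruct --- the base cases $6\le n\le 14$ (where $k_n=2$) disposed of by the same nine explicit decompositions $12=5+7,\dots,28=5+23$, and for $k_n\ge 3$ the reduction of Proposition~\ref{bound_t_gold} via the paired progression $\langle p_{k_n},\,p_{k_n}-2\mdot n\rangle_{p_{k_n}^2-p_{k_n}-1}$ to the inequality $h_2(k)<p_k^2-p_k$, which is itself the paper's third conjecture and is left unproven there as well. So your account coincides with the paper's, and you are right that the entire difficulty is concentrated in the $h_2$ bound; you even silently correct the paper's misstated range (``$6\le n<30$'' should read $6\le n<15$, i.e.\ $12\le 2\mdot n<30$). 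The only thing to be clear about is that your third stage is a genuine, acknowledged gap on both sides: neither your argument nor the paper establishes $h_2(k)<p_k^2-p_k$, so the statement remains a conjecture, verified computationally for $12\le 2\mdot n\le 10^8$ but proved for no infinite range.
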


\bigskip

This conjecture was verified for all $n$ with $12\le 2\mdot n \le 10^8$. In the ancillary file \dq goldbach.lis\dq, we exemplarily provide an exhaustive list of all of the always smallest corresponding pairs $(q_1,q_2)$ with $q_1+q_2=2\mdot n$ for $12\le 2\mdot n \le 500000$, respectively.\\\\

\begin{conj} {\itshape Tightened prime pairs conjecture.}\\
For every natural number $n\in\mN$ and every prime $p\in\mP$ with $p>2\mdot n$, there exists\linebreak a pair of primes $q_1,q_2\in\mP$ with the conditions
$p<q_1<p^2$ and $q_2-q_1=2\mdot n$.
\begin{equation*} \begin{split}
&\forall\ n\in\mN\ \forall p\in\mP,\ p>2\mdot n\\
&\exists\ q_1,q_2\in\mP:p<q_1<p^2\ \land\ q_2-q_1=2\mdot n.
\end{split} \end{equation*}
\end{conj}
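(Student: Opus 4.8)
The plan is to mirror the proof of Proposition~\ref{bound_t_gold} almost verbatim, exploiting the structural parallel between expressing $2\mdot n$ as a sum $q_1+q_2$ and as a difference $q_2-q_1=2\mdot n$. First I would dispose of the degenerate case: the only prime $p=p_k$ exceeding $2\mdot n$ with index $k<3$ is $p=3$ (forcing $n=1$), and here the explicit witness $q_1=5$, $q_2=7$ settles it. This lets us assume $k\ge 3$ from here on.

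Next, fix $n\in\mN$ and choose an arbitrary prime $p_k>2\mdot n$ with $k\ge 3$. The goal is to produce a pair $q_1,q_2\in\mP$ with $p_k<q_1<p_k^2$ and $q_2-q_1=2\mdot n$. The key reduction is the same localization argument as before: if $p_k<q_1,q_2<p_{k+1}^2$, then $q_1$ and $q_2$ are prime \emph{if and only if} each is coprime to every $p_i$ for $i=1,\dots,k$, since any composite in that range has a prime factor among $p_1,\dots,p_k$. So I would first verify the two-sided range bound: $q_1>p_k$ and $q_1<p_k^2<p_{k+1}^2$ are immediate from the constraint on $q_1$; and $q_2=q_1+2\mdot n$ satisfies $q_2>q_1>p_k$ trivially, while $q_2=q_1+2\mdot n<p_k^2+p_k<(p_k+1)^2<p_{k+1}^2$ using $2\mdot n<p_k$ and Bertrand's postulate (or just $p_{k+1}>p_k+1$, hence $p_{k+1}^2\ge(p_k+1)^2$ for $k\ge 1$). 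This confirms both candidates lie in the good window.

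Then I would translate the coprimality requirement into a statement about a paired progression. The condition $p_i\copr(q_1, q_2)$ with $q_2=q_1+2\mdot n$ is exactly $p_i\copr(q_1,\,q_1+2\mdot n)$ for all $i\le k$. Setting $q_1=p_k+q$ for a shift $q$, this asks for a pair $(p_k+q,\;p_k+2\mdot n+q)$ coprime to all of $p_1,\dots,p_k$, i.e.\ a pair in the paired progression $\langle p_k,\;p_k+2\mdot n\rangle_{m}$ that is coprime to $p_k\#$. The difference of the pair elements is $2\mdot n$, which is even, so the progression is admissible for $h_2$. Taking $m=p_k^2-p_k-1$ forces the shift $q$ to range over $\{1,\dots,p_k^2-p_k-1\}$, keeping $q_1=p_k+q$ strictly between $p_k$ and $p_k^2$ as required. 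By the hypothesis $h_2(k)<p_k^2-p_k$, we have $h_2(k)\le p_k^2-p_k-1=m$, so every such progression of length $m$ contains a pair coprime to $p_k\#$; in particular $\langle p_k,\;p_k+2\mdot n\rangle_{m}$ does, and that pair yields the desired $q_1,q_2$.

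I do not anticipate a genuine obstacle here — the argument is a direct adaptation of Proposition~\ref{bound_t_gold}, with $2\mdot n-q_1$ replaced by $2\mdot n+q_1$ throughout. The only point requiring a moment's care is the upper bound $q_2<p_{k+1}^2$: in the Goldbach case this used $q_1\ge p_{k_n+1}$, whereas here it follows more simply from $q_2=q_1+2\mdot n<p_k^2+p_k<(p_k+1)^2\le p_{k+1}^2$, so one should make sure to invoke $2\mdot n<p_k$ (which is our standing hypothesis on $p$) rather than accidentally importing the Goldbach-style inequality chain. Everything else — the degenerate base case, the localization-to-prime-factors step, and the final appeal to the $h_2$ bound — goes through unchanged.
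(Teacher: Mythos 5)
Your argument is correct and is essentially identical to the paper's own proof of Proposition~\ref{bound_t_pairs}: the same base case $p=3$, $q_1=5$, $q_2=7$, the same localization of $q_1,q_2$ to the interval $(p_k,p_{k+1}^2)$ via $q_2=q_1+2\mdot n<p_k^2+p_k<(1+p_k)^2\le p_{k+1}^2$, and the same appeal to $h_2(k)\le p_k^2-p_k-1$ applied to the paired progression $\langle p_k,p_k+2\mdot n\rangle_{p_k^2-p_k-1}$. Note only that what this establishes (as in the paper) is the \emph{conditional} statement of Proposition~\ref{bound_t_pairs}, i.e.\ the tightened prime pairs conjecture under the hypothesis $h_2(k)<p_k^2-p_k$ for all $k\ge 3$, not the unconditional conjecture itself.
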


\bigskip

This conjecture was verified for all $n$ with $2\mdot n \le 10^4$ and all $3\le p<10^6$. In the ancillary file \dq pairs.lis\dq, we exemplarily provide an exhaustive list of all of the\linebreak always smallest corresponding pairs $(q_1,q_2)$ with $q_2-q_1=2\mdot n$ for $2\mdot n \le 100$ and\linebreak all $3\le p<25000$, respectively.\\\\

\begin{conj} {\itshape Upper bound of  the primorial paired Jacobsthal function.}\\
Let $n\in\mN\ge 3$. Then
$$h_2(n)<p_n^2-p_n.$$ 
\end{conj}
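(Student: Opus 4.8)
\emph{Strategy.} The plan is to first recast the statement as a purely combinatorial covering problem and then to attack that problem with sieve methods.

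\emph{Step 1: reduction.} By the prime characterisation in the Remark ($p\mdiv(x,y)\Leftrightarrow p\mdiv x\lor p\mdiv y$ for $p\in\mP$) together with the identity $b+q=(a+q)+(b-a)$, a pair $(a+q,b+q)$ fails to be coprime to $p_n\#$ precisely when some $p_i$ with $i\le n$ divides $(a+q)$ or $(a+q)+d$, where $d=b-a$. Hence $h_2(n)-1$ equals the maximum, taken over all even $d\ge 2$ and over all intervals of consecutive integers, of the length of a run of consecutive integers $t$ each of which satisfies $p_i\mdiv t$ or $p_i\mdiv(t+d)$ for some $i\le n$. Consequently it suffices to prove: for every even $d$ and every interval $I$ of $p_n^2-p_n-1$ consecutive integers there is a $t\in I$ with $t\copr p_i$ and $(t+d)\copr p_i$ for all $i\le n$.

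\emph{Step 2: the sieve argument.} Fix an even $d$. By the Chinese Remainder Theorem the admissible residues $t\bmod p_n\#$ form a product set: modulo $p_1=2$ exactly one class is forbidden (since $d$ even forces $0\equiv -d$), and modulo each odd $p_i$ one class is forbidden if $p_i\mdiv d$ and two otherwise. The task is to bound the largest gap between consecutive admissible residues. I would combine two ingredients. First, a greedy peeling: removing from $I$ the (at most two) residue classes contributed by the largest primes cuts $I$ into subintervals on each of which the same property holds with $p_1,\dots,p_{n-1}$ and the same $d$, hence each piece has length $<h_2(n-1)$; this lets one recurse. Second, for the remaining small primes one uses the density $\tfrac12\prod_{2\le i\le n}(1-2/p_i)$ of the admissible set together with an Iwaniec-type bound for the (two-dimensional) Jacobsthal-style sieve on the tuple $\{0,d\}$, which should yield a gap bound of order $p_n^2$. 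Balancing the point at which one switches from peeling to the density/sieve estimate is where the precise constant must be extracted. For small $n$ one instead simply computes $h_2(n)$ and checks $h_2(n)<p_n^2-p_n$ directly.

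\emph{The main obstacle.} The crux is that $\sum_{i\ge 2}2/p_i$ diverges, so the admissible residues modulo $p_n\#$ have density tending to $0$; controlling their largest gap \emph{by the sharp value} $p_n^2-p_n$, rather than by $O\bigl(p_n^2(\log p_n)^{c}\bigr)$ or with a large implicit constant, is a quantitative assertion about the spacing of ``twin-pattern'' rough numbers that the standard two-dimensional sieve does not deliver. Moreover the naive recursion in $p_n$ loses a factor $2$ at each step and therefore diverges on its own, so a genuinely global argument is unavoidable. I expect the realistic route to be: (i) settle small $n$ by direct enumeration of $h_2(n)$; (ii) obtain an unconditional $O(p_n^2)$ bound via a careful sieve on $\{0,d\}$; and (iii) sharpen the constant to $1$, exploiting the freedom in the choice of $d$ and the structural constraints on which residue classes can be realised simultaneously. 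Step (iii) is the one I expect to resist elementary treatment and to constitute the true difficulty; it is for this reason that the statement is offered here as a conjecture.
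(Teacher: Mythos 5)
There is a genuine gap --- in fact the entire quantitative content of the statement is missing --- and you have correctly diagnosed this yourself. Note first that the paper offers no proof of this statement: it is stated as a conjecture, supported only by the implications it would yield (Propositions \ref{bound_t_gold} and \ref{bound_t_pairs}) and by numerical evidence for the downstream tightened conjectures. Your Step 1 is sound and matches the definitional reformulation already present in the paper: $h_2(n)-1$ is the maximal length of a run of integers $t$ each satisfying $p_i\mdiv t$ or $p_i\mdiv (t+d)$ for some $i\le n$, maximised over even $d$, i.e.\ a largest-gap problem for the admissible residues modulo $p_n\#$ determined by the pattern $\{0,d\}$. (One small correction: the density $\tfrac12\prod_{2\le i\le n}(1-2/p_i)$ holds only when no odd $p_i$ divides $d$; otherwise the admissible set is larger.)

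The gap is Step 2, where no actual argument is supplied. The greedy peeling recursion diverges, as you concede; and the ``Iwaniec-type bound of order $p_n^2$'' is not something you can cite and then sharpen --- it is the open problem itself. Observe that taking $d\equiv 0$ (the progression $\langle a,a\rangle_m$) shows $h_2(n)\ge j(p_n\#)$, so your program would in particular need to establish $j(p_n\#)<p_n^2-p_n$, which is already an open conjecture in the one-dimensional case: Iwaniec's sieve gives $j(N)\ll(\log N)^2$, hence $j(p_n\#)=O(p_n^2)$ only with an unspecified (and in practice large) constant, and no known method reaches the exact threshold with constant $1$. The two-dimensional version is at least as hard, and the known Rankin-type lower-bound constructions warn that the admissible set can have unexpectedly long gaps. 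Your items (i) and (ii) are plausible but (ii) is already nontrivial to make effective, and (iii) is not a technical refinement but the whole difficulty. The honest conclusion, which both you and the paper reach, is that the statement must remain a conjecture; your text should not be presented as a proof but as a discussion of why a proof is currently out of reach.
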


\pagebreak

The following scheme resumes and depicts the results of the previous sections. New conjectures are highlighted in grey.\\

\begin{figure}[!h]
  \centering
  \includegraphics[width=0.95\textwidth]{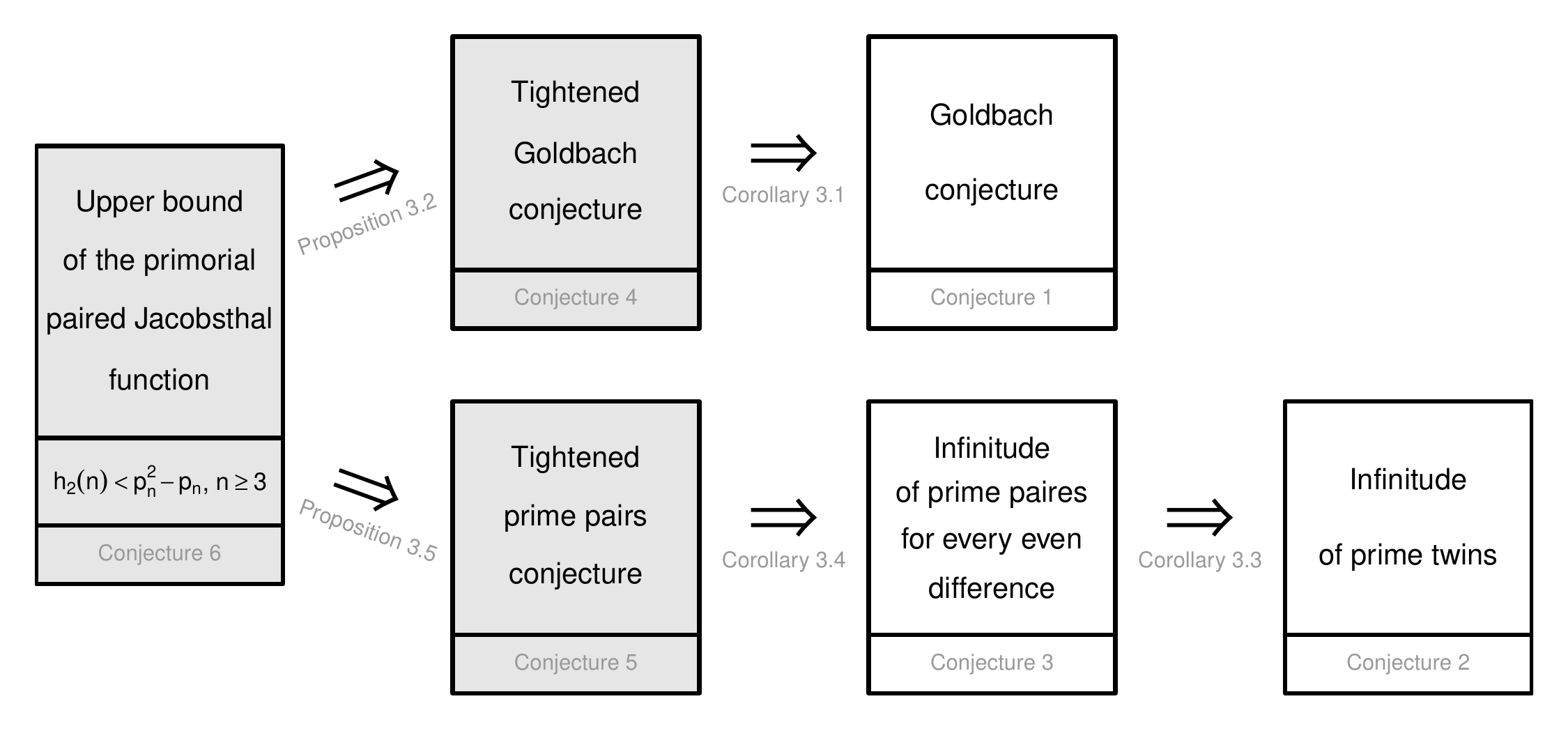}
  \caption{Scheme of proved inferences.}
\end{figure}

\bigskip
\bigskip
\bigskip

The main result of this paper is summarised in the following concluding theorem.\\

\begin{theo}
The conjectured upper bound of the primorial paired Jacobsthal function is sufficient for the truth of the Goldbach conjecture and of the infinitude of prime pairs for every even difference.
\end{theo}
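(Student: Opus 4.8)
The plan is to observe that this concluding theorem is nothing more than the composition of the implications already established in Section~3; accordingly, the proof consists in chaining Propositions~\ref{bound_t_gold} and~\ref{bound_t_pairs} with Corollaries~\ref{t_gold} and~\ref{t_pairs}. First I would fix the hypothesis: assume that the conjectured upper bound of the primorial paired Jacobsthal function holds, i.e. that $h_2(k)<p_k^2-p_k$ for every $k\in\mN\ge 3$. This is precisely the antecedent that appears, word for word, in both Proposition~\ref{bound_t_gold} and Proposition~\ref{bound_t_pairs}.

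For the Goldbach half, I would feed this hypothesis into Proposition~\ref{bound_t_gold}, obtaining that for every $n\in\mN\ge 6$ there exist primes $q_1,q_2$ with $p_{k_n}<q_1<p_{k_n}^2$ and $q_1+q_2=2\mdot n$. That statement is exactly the sufficient condition isolated in Corollary~\ref{t_gold}, so the Goldbach conjecture follows for all even numbers $2\mdot n$ with $n\ge 6$; the finitely many residual cases $2\mdot n\in\{4,6,8,10\}$ are covered by the explicit decompositions already recorded in the proof of Corollary~\ref{t_gold}.

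For the prime pairs half, I would feed the same hypothesis into Proposition~\ref{bound_t_pairs}, obtaining that for every $n\in\mN$ and every prime $p\in\mP$ with $p>2\mdot n$ there exist primes $q_1,q_2$ with $p<q_1<p^2$ and $q_2-q_1=2\mdot n$. This is verbatim the hypothesis of Corollary~\ref{t_pairs}, whose conclusion is the prime pairs conjecture: for every even difference $d=2\mdot n$ there are infinitely many prime pairs with that difference. Specialising to $n=1$ recovers the twin prime conjecture, as already noted in the excerpt.

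Since every single link in this chain was proved earlier, I do not expect a genuine obstacle; the entire content of the proof is the bookkeeping check that the antecedent $h_2(k)<p_k^2-p_k$ for $k\ge 3$ used by the two propositions is identical to the statement of the conjectured bound, and that the consequents of those propositions coincide with the hypotheses of the two tightening corollaries. The one point worth double-checking is the index arithmetic in the Goldbach branch, namely that $k_n\ge 3$ holds once $n$ is large enough and that the small-$n$ ranges are exhausted by the listed examples; but this was already discharged inside the proofs of Proposition~\ref{bound_t_gold} (for $6\le n<30$ and $n\ge 30$) and Proposition~\ref{bound_t_pairs}, so no new estimate is required.
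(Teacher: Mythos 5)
Your proposal is correct and follows exactly the paper's own argument: the theorem is proved by chaining Propositions \ref{bound_t_gold} and \ref{bound_t_pairs} with Corollaries \ref{t_gold} and \ref{t_pairs}, which is precisely what you do. The additional bookkeeping you spell out (matching antecedents to consequents, and the small cases $2\mdot n\in\{4,6,8,10\}$) is already discharged inside those earlier results, so nothing further is needed.
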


\begin{proof}
This theorem follows from the propositions \ref{bound_t_gold} and \ref{bound_t_pairs} and the corollaries \ref{t_gold} and \ref{t_pairs}.
\end{proof}

\bigskip
\bigskip
\bigskip

\subsubsection*{Contact}
marioziller@arcor.de\\
axelmorack@live.com

\pagebreak


\bibliography{References}     


\end{document}